\newtheorem{theorem}{Theorem}
\newtheorem{corollary}{Corollary}
\newtheorem{definition}[theorem]{Definition}
\newtheorem{lemma}{Lemma}
\newenvironment{proof}[1][Proof]{\noindent\textbf{#1.} }{\ \rule{0.5em}{0.5em}}
\begin{document}

\title{ An hybrid deterministic-stochastic iterative procedure to solve the
heat equation}
\author{Fouad Maouche\thanks{%
Laboratory of Applied Mathematics,Faculty of Exact Sciences, University of
Bejaia, Algeria. E-mail: fouad.maouche@univ-bejaia.dz} \and Laboratory of
Applied Mathematics, \and Faculty of Exact Sciences, University of Bejaia,
Algeria.}
\date{}
\maketitle

\begin{abstract}
Our goal in this paper is to solve the 1-D heat equation by an hybrid
deterministic-stochastic iterative procedure . The deterministic side
consists in discretizing the equation by the Crank-Nicolson method and the
stochastic side consists of applying Robbins Monro procedure to solve the
resulting matrix system. The almost complete convergence and the rate of
convergence of our procedure are established.

\textbf{Keywords: }Robbins-Monro, heat equation, Crank-Nicolson, Almost
complete convergence.

\textbf{Mathematics Subject Classification}: 65N21, 65C50.
\end{abstract}

\section*{Introduction}

Consider the heat equation problem on an interval $I$ in $\mathbb{R}$,

\begin{equation}
\left\{ 
\begin{array}{c}
\frac{^{\partial u(x,t)}}{\partial t}-D\frac{^{\partial ^{2}u(x,t)}}{%
\partial x^{2}}=0, \\ 
u(x,0)=f(x).%
\end{array}%
\right.  \label{a}
\end{equation}

Many researchers have worked on the one dimensional heat conduction equation
using various numerical methods and finite difference methods are the mostly
used of all the numerical methods \textrm{\cite{sun,sun1}}. Recently, O.
Nikan et al \textrm{\cite{nik}} proposed an efficient coupling of the
Crank--Nicolson scheme and localized meshless technique for viscoelastic
wave model in fluid flow, C.Chen et al \textrm{\cite{che} } present a
second-order accurate Crank-Nicolson scheme for the two-grid finite elements
for nonlinear Sobolev equations and M.\ Ran et al \textrm{\cite{run}}\
proposed a linearized Crank--Nicolson scheme for the nonlinear time space
fractional Schr\"{o}dinger equations.

Many other authors proposed iterative methods to solve the heat conduction
equation. For example, Newell--Whitehead--Segel equations of fractional
order are solved by fractional variational iteration method in \textrm{\cite%
{pra} }and Landweber iterative regularization method to identify the initial
value problem of the time-space fractional diffusion-wave equation was
studied in \textrm{\cite{yan}.}

Stochastic algorithms are part of modern techniques for numerical solution
of many practical problems : signal processing and adaptive control \cite%
{Ding,Kushner}, inverse problems \cite{maouche}, communication and system
identification \textrm{\cite{wang,liu,ding3}}.

Let $(\Omega ,\digamma ,P)$ be a probability space and assume that $\inf
\left\{ re\text{ }\lambda ;\text{ }\lambda \in \sigma (A)\right\} >0$ with $%
\sigma (A)$ is a spectrum of a matrix $A.$

Our goal is to solve the heat equation $\left( \ref{a}\right) $ by a hybrid
iterative procedure (deterministic and stochastic). The deterministic side
consists in discretizing equation $\left( \ref{a}\right) $ by the
Crank-Nicolson method and the stochastic side consists of applying Robbins
Monro procedure \textrm{\cite{Robbins}}, which uses the full forward model
when the noise on the right hand side is stochastic, to solve the resulting
matrix system.

By Hoeffding exponential inequalities, the almost complete convergence and
the rate of convergence of the iterative procedure to solve the heat
equation $\left( \ref{a}\right) $ are established.

\section{Methodology}

The crank-Nicolson method combines the stability of an implicit method with
the accuracy of a second-order method in both space and time. Simply from
the average of the explicit and implicit FTCS schemes (left and right sides
are centred at time step $m+0.5$).

The following approximation expression holds

\begin{equation*}
\frac{^{\partial u(x,t)}}{\partial t}-D\frac{^{\partial ^{2}u(x,t)}}{%
\partial x^{2}}\simeq \frac{u_{n,m+1}-u_{n,m}}{\Delta t}-\frac{D}{2}\left( 
\frac{u_{n+1,m+1}-2u_{n,m+1}+u_{n-1,m+1}+u_{n+1,m}-2u_{n,m}+u_{n-1,m}}{%
\Delta x^{2}}\right) .
\end{equation*}

The scheme can be written as follow

\begin{equation*}
au_{n-1,m}+(2-2a)u_{n,m}+au_{n+1,m}=-au_{n-1,m+1}+(2+2a)u_{n,m+1}-au_{n+1,m+1}.
\end{equation*}

These equations holds for $1\leq n\leq N-1,$ and the boundary conditions
supply the two missing equations. The Crank-Nicolson method will be written
in the following matrix form

$%
\begin{pmatrix}
-a & 2+2a & -a & 0 & . & . & . \\ 
0 & -a & 2+2a & -a & . & . & . \\ 
. & . & . & . & . & . & . \\ 
. & . & . & . & . & . & . \\ 
. & . & . & . & . & . & . \\ 
. & . & . & . & 2+2a & -a & 0 \\ 
. & . & . & . & -a & 2+2a & -a%
\end{pmatrix}%
\begin{pmatrix}
u_{0,m+1} \\ 
u_{1,m+1} \\ 
. \\ 
. \\ 
. \\ 
u_{N-1,m+1} \\ 
u_{N,m+1}%
\end{pmatrix}%
=$

$%
\begin{pmatrix}
a & 2-2a & a & 0 & . & . & . \\ 
0 & a & 2-2a & a & . & . & . \\ 
. & . & . & . & . & . & . \\ 
. & . & . & . & . & . & . \\ 
. & . & . & . & . & . & . \\ 
. & . & . & . & 2-2a & a & 0 \\ 
. & . & . & . & a & 2-2a & a%
\end{pmatrix}%
\begin{pmatrix}
u_{0,m} \\ 
u_{1,m} \\ 
. \\ 
. \\ 
. \\ 
u_{N-1,m} \\ 
u_{N,m}%
\end{pmatrix}%
.$

The matricial equation above is a representation of $N-1$ equations with $%
N+1 $ unknown, the matrices have $N-1$ rows and $N+1$columns and the two
missing equations come from the boundary conditions which we use to convert
this matricial equation into a system of equations involving a square matrix
of the form:

\begin{equation}
A_{m+1}u_{m+1}+r_{m+1}=B_{m}u_{m}+w_{m},  \label{ck1}
\end{equation}

with

$A_{m+1}=%
\begin{pmatrix}
2+2a & -a & 0 & . & . & . & . \\ 
-a & 2+2a & . & . & . & . & . \\ 
0 & . & . & . & . & . & . \\ 
. & . & . & . & . & . & . \\ 
. & . & . & . & . & -a & . \\ 
. & . & . & . & -a & 2+2a & -a \\ 
. & . & . & . & 0 & -a & 2+2a%
\end{pmatrix}%
,$

$B_{m}=%
\begin{pmatrix}
2-2a & a & 0 & . & . & . & . \\ 
a & 2-2a & a & . & . & . & . \\ 
0 & . & . & . & . & . & . \\ 
. & . & . & . & . & . & . \\ 
. & . & . & . & . & a & . \\ 
. & . & . & . & a & 2-2a & a \\ 
. & . & . & . & 0 & a & 2-2a%
\end{pmatrix}%
,$

$u_{m+1}=%
\begin{pmatrix}
u_{1,m+1} \\ 
. \\ 
. \\ 
. \\ 
u_{N-1,m+1}%
\end{pmatrix}%
,$ $u_{m}=%
\begin{pmatrix}
u_{1,m} \\ 
. \\ 
. \\ 
. \\ 
u_{N-1,m}%
\end{pmatrix}%
,$ $r_{m+1}=%
\begin{pmatrix}
-au_{0,m+1} \\ 
0 \\ 
. \\ 
. \\ 
-au_{N,m+1}%
\end{pmatrix}%
$ and $w_{m}=%
\begin{pmatrix}
au_{0,m} \\ 
0 \\ 
. \\ 
. \\ 
a_{N,m}%
\end{pmatrix}%
.$

The equation $\left( \ref{ck1}\right) $ can be written in following form

\begin{equation}
A_{m+1}u_{m+1}=B_{m}u_{m}+w_{m}-r_{m+1}.  \label{ck2}
\end{equation}

The matrix inversion method to find the solution $u_{m+1}$ is very time
consuming and computationally inefficient. In this paper, we propose the
following iterative procedure to solve the equation $\left( \ref{ck2}\right)
.$

\begin{equation}
\left\{ 
\begin{array}{c}
X_{(k+1)}=X_{(k)}-\frac{1}{k}\left[
A_{m+1}X_{(k)}-(B_{m}u_{m}+w_{m}-r_{m+1})-\xi _{(k)}\right] ,\text{ }%
k=1,2,..., \\ 
X_{(0)}\in 
\mathbb{R}
^{N-1}%
\end{array}%
\right.  \label{cA}
\end{equation}

where $(X_{(k)})_{k\in 
\mathbb{N}
^{\ast }}$ are vectors of $%
\mathbb{R}
^{n}$and $\left( \xi _{(k)}\right) _{k\in 
\mathbb{N}
^{\ast }}$ is a sequence of bounded and i.i.d random variables with values
in $%
\mathbb{R}
^{n}$ satisfying 
\begin{equation}
\left\Vert \xi _{(k)}\right\Vert <b,\text{ }b\in 
\mathbb{R}
\label{a'A}
\end{equation}

\section{ Preliminary results}

According to \textrm{\cite[lemma 1]{maouche} }and after successive
iterations, the following relation is obtained%
\begin{equation}
X_{(k+1)}-X_{(ex)}=\dprod\limits_{i=1}^{k}(I-\frac{1}{i}%
A_{m+1})(X_{1}-X_{ex})+\sum_{i=1}^{k}\dprod\limits_{j=i+1}^{k}(I-\frac{1}{j}%
A_{m+1})\frac{1}{i}\xi _{i}.  \label{e}
\end{equation}

Where, $X_{(ex)}$ is the exact solution and $\dprod\limits_{j=k+1}^{k}(I-%
\frac{1}{j}A_{m+1})=I$ for $1\leq i,j\leq k,$ with $I$ is the unit matrix.

\begin{lemma}
Suppose that $\inf \left\{ re\text{ }\lambda ;\text{ }\lambda \in \sigma
(A_{m+1})\right\} >0.$ The following expression holds. 
\begin{equation}
\exists \gamma >0,\text{ }\exists p>0,\text{ }\forall \text{ }1\leq i\leq
k:\left\Vert \dprod\limits_{j=i+1}^{k}(I-\frac{1}{j}A_{m+1})\right\Vert \leq
\gamma \left( \frac{i+1}{k+1}\right) ^{p}.  \label{ee}
\end{equation}

\ 
\end{lemma}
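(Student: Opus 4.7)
The strategy is to reduce the matrix product to a scalar estimate via the spectral decomposition of $A_{m+1}$, and then control the resulting scalar product through the asymptotic behavior of harmonic sums. Since the matrix $A_{m+1}$ displayed above is real symmetric and tridiagonal, it is orthogonally diagonalizable: write $A_{m+1}=QDQ^{T}$ with $Q$ orthogonal and $D=\mathrm{diag}(\lambda_{1},\dots ,\lambda_{N-1})$, where by hypothesis $\lambda_{\ell}\geq p_{0}:=\inf\{re\,\lambda:\lambda\in\sigma(A_{m+1})\}>0$. Because orthogonal conjugation preserves the spectral norm,
$$\Bigl\|\prod_{j=i+1}^{k}\bigl(I-\tfrac{1}{j}A_{m+1}\bigr)\Bigr\|=\max_{1\leq\ell\leq N-1}\;\prod_{j=i+1}^{k}\bigl|1-\tfrac{\lambda_{\ell}}{j}\bigr|,$$
so it suffices to prove the bound $(i+1/k+1)^{p}$ for each scalar product on the right.

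For each eigenvalue $\lambda_{\ell}$, I would fix an integer $j_{\ell}\geq\lambda_{\ell}+1$ and split the product at $j_{\ell}$. The finitely many factors with $j<j_{\ell}$ contribute an absolute constant $C_{\ell}$ depending only on $\lambda_{\ell}$. On the range $j\geq j_{\ell}$ one has $0\leq 1-\lambda_{\ell}/j\leq \exp(-\lambda_{\ell}/j)$, so taking logarithms,
$$\log\prod_{j=j_{\ell}}^{k}\bigl(1-\tfrac{\lambda_{\ell}}{j}\bigr)\leq -\lambda_{\ell}\sum_{j=j_{\ell}}^{k}\tfrac{1}{j}\leq -\lambda_{\ell}\,\log\!\Bigl(\tfrac{k+1}{j_{\ell}}\Bigr)+O(1).$$
Exponentiating and folding the initial segment into $C_{\ell}$ yields
$$\prod_{j=i+1}^{k}\bigl|1-\tfrac{\lambda_{\ell}}{j}\bigr|\leq \gamma_{\ell}\Bigl(\tfrac{i+1}{k+1}\Bigr)^{\lambda_{\ell}}.$$
Since $(i+1)/(k+1)\leq 1$ and $\lambda_{\ell}\geq p_{0}$, each such bound is dominated by $\gamma_{\ell}((i+1)/(k+1))^{p_{0}}$, and the conclusion follows with $p=p_{0}$ and $\gamma=\max_{\ell}\gamma_{\ell}$.

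The main obstacle is the small-$j$ regime, where $|1-\lambda_{\ell}/j|$ may exceed $1$ or even change sign; this is handled cleanly by the splitting at $j_{\ell}$. A secondary subtlety, should one wish to argue for a general matrix $A$ with $\sigma(A)$ in the open right half-plane (without the symmetry afforded by the Crank--Nicolson construction), is that $\|I-A/j\|$ need not be bounded by $1-p_{0}/j$ when $A$ is non-normal. In that case the proof goes through the Jordan decomposition $A=PJP^{-1}$ and the estimate acquires a factor $\|P\|\|P^{-1}\|$, with the further cost of replacing $p_{0}$ by any strictly smaller $p$ in order to absorb polynomial factors arising from nontrivial Jordan blocks; this does not affect the form of \eqref{ee}.
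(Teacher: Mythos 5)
Your proof is correct, but it takes a genuinely different route from the paper. The paper does not prove the bound from scratch: it quotes Lemma 3.b of Walk and Zsid\'o, which already yields $\left\Vert \prod_{j=i+1}^{k}(I-\frac{1}{j}A_{m+1})\right\Vert \leq \gamma \bigl(\prod_{j=i+1}^{k}(1-\frac{1}{j})\bigr)^{p}$ for an operator whose spectrum lies in the open right half-plane, and then spends the remainder of the proof converting $\bigl(\prod_{j=i+1}^{k}(1-\frac{1}{j})\bigr)^{p}$ into $\bigl(\frac{i+1}{k+1}\bigr)^{p}$ via $\ln(1+x)\leq x$ and an integral comparison (a step that is in fact immediate by telescoping, since $\prod_{j=i+1}^{k}(1-\frac{1}{j})=\frac{i}{k}\leq\frac{i+1}{k+1}$). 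You instead give a self-contained argument that exploits the specific structure of the Crank--Nicolson matrix: $A_{m+1}$ is real symmetric, all factors $I-\frac{1}{j}A_{m+1}$ commute and are simultaneously orthogonally diagonalized, so the operator norm of the product is exactly $\max_{\ell}\prod_{j=i+1}^{k}\bigl|1-\frac{\lambda_{\ell}}{j}\bigr|$, and the scalar estimate via $1-\frac{\lambda}{j}\leq e^{-\lambda/j}$ and the harmonic sum does the rest. What your approach buys is independence from the cited lemma and an explicit admissible exponent, namely $p=p_{0}=\min_{\ell}\lambda_{\ell}$, which is sharper information than the unspecified $p>0$ the paper inherits from the citation; what the paper's approach buys is generality (it works for non-normal operators, even in Banach space, which is exactly the situation you correctly flag as requiring the Jordan decomposition and a strictly smaller $p$). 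Two small points you compress but which are routine: the conversion of the base of the power from $j_{\ell}$ to $i+1$ when $i+1<j_{\ell}$ costs only the constant factor $j_{\ell}^{\lambda_{\ell}}$, and the degenerate case $k<j_{\ell}$ is absorbed by enlarging $\gamma_{\ell}$, since $\frac{i+1}{k+1}$ is then bounded below by a positive constant.
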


\begin{proof}
Under the condition $\inf \left\{ re\text{ }\lambda ;\text{ }\lambda \in
\sigma (A_{m+1})\right\} >0,$ H. Walk obtained the following result \cite[%
Lemma 3.b]{walk1}\newline
\begin{equation}
\exists \gamma >0,\text{ }\exists p>0,\text{ }\forall \text{ }1\leq i\leq
k:\left\Vert \dprod\limits_{j=i+1}^{k}(I-\frac{1}{j}A_{m+1})\right\Vert \leq
\gamma \left( \dprod\limits_{j=i+1}^{k}\left( 1-\frac{1}{j}\right) \right)
^{p}.  \label{walk}
\end{equation}%
\newline
Then, taking into account that: $\ln \left( 1+x\right) \leq x$, for $x>-1$,
we have%
\begin{equation*}
p\ln \left( 1-\frac{1}{j^{\theta }}\right) \leq -\frac{p}{j^{\theta }}
\end{equation*}%
\begin{equation*}
\sum_{j=i+1}^{k}\ln \left( 1-\frac{1}{j^{\theta }}\right) ^{p}\leq
\sum_{j=i+1}^{k}-\frac{p}{j^{\theta }}=-p\sum_{j=i+1}^{k}\frac{1}{j^{\theta }%
}=-p\int\limits_{i+1}^{k+1}\frac{1}{x^{\theta }}dx.
\end{equation*}%
\newline
Then%
\begin{equation*}
\sum_{j=i+1}^{k}\ln \left( 1-\frac{1}{j^{\theta }}\right) ^{p}\leq p\ln (%
\frac{i+1}{k+1}).
\end{equation*}%
\newline
This implies that%
\begin{equation*}
\exp \left( \sum_{j=i+1}^{k}\ln \left( 1-\frac{1}{j^{\theta }}\right)
^{p}\right) \leq \exp \left( \ln \left( \frac{i+1}{k+1}\right) ^{p}\right) .
\end{equation*}%
\newline
So,%
\begin{equation}
\gamma \left( \dprod\limits_{j=i+1}^{k}\left( 1-\frac{1}{j^{\theta }}\right)
\right) ^{p}\leq \gamma \left( \frac{i+1}{k+1}\right) ^{p}.  \label{gA}
\end{equation}%
\newline
\newline
From $(\ref{gA})$ we deduce that%
\begin{equation}
\lim_{k\rightarrow \infty }\left\Vert \dprod\limits_{j=1}^{k}(I-\frac{1}{j}%
A_{m+1})\right\Vert \leq \lim_{k\rightarrow \infty }\gamma \left(
\dprod\limits_{j=1}^{k}\left( 1-\frac{1}{j^{\theta }}\right) \right) ^{p}=0.
\label{ine}
\end{equation}
\end{proof}

\begin{lemma}
\bigskip Under assumptions of Lemma 1, the following expression holds.%
\begin{equation}
\exists \gamma >0,\text{ }\exists p>0,\text{ }\forall \text{ }1\leq i\leq
k:\sum_{i=1}^{k}\left\Vert \dprod\limits_{j=i+1}^{k}(I-\frac{1}{j}A_{m+1})%
\frac{1}{i}\right\Vert ^{2}\leq C\frac{\gamma ^{2}}{(k+1)^{2p}},  \label{h'}
\end{equation}%
with $C$ is a constant$.$\newline
\end{lemma}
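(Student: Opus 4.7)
The plan is to apply Lemma~1 inside the norm and then reduce the whole question to a scalar sum that can be estimated by comparison with an integral.

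First I would use submultiplicativity of the matrix norm and pull out the scalar $\tfrac{1}{i}$, obtaining
\begin{equation*}
\left\Vert \dprod\limits_{j=i+1}^{k}\bigl(I-\tfrac{1}{j}A_{m+1}\bigr)\tfrac{1}{i}\right\Vert ^{2}
=\tfrac{1}{i^{2}}\,\left\Vert \dprod\limits_{j=i+1}^{k}\bigl(I-\tfrac{1}{j}A_{m+1}\bigr)\right\Vert ^{2}.
\end{equation*}
Lemma~1 then gives, for some $\gamma>0$, $p>0$ and every $1\le i\le k$,
\begin{equation*}
\tfrac{1}{i^{2}}\,\left\Vert \dprod\limits_{j=i+1}^{k}\bigl(I-\tfrac{1}{j}A_{m+1}\bigr)\right\Vert ^{2}
\le \tfrac{\gamma ^{2}}{i^{2}}\Bigl(\tfrac{i+1}{k+1}\Bigr)^{2p}.
\end{equation*}

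Next I would factor out the term $(k+1)^{-2p}$ from the summation to isolate the decay rate claimed in~\eqref{h'}:
\begin{equation*}
\sum_{i=1}^{k}\left\Vert \dprod\limits_{j=i+1}^{k}\bigl(I-\tfrac{1}{j}A_{m+1}\bigr)\tfrac{1}{i}\right\Vert ^{2}
\le \tfrac{\gamma ^{2}}{(k+1)^{2p}}\sum_{i=1}^{k}\tfrac{(i+1)^{2p}}{i^{2}}.
\end{equation*}
The remaining task is to check that the scalar sum $S_{k}:=\sum_{i=1}^{k}(i+1)^{2p}/i^{2}$ is bounded by a constant $C$ independent of $k$. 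Since $(i+1)/i\le 2$, one has $(i+1)^{2p}\le 2^{2p}\,i^{2p}$, hence $S_{k}\le 2^{2p}\sum_{i=1}^{k}i^{2p-2}$; for the admissible range of $p$ (taking $p$ small enough, as is permitted in Walk's estimate), $2p-2<-1$ and the series $\sum_{i\ge 1}i^{2p-2}$ converges, yielding $S_{k}\le C$. Combining the two displays gives precisely~\eqref{h'}.

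The only delicate point is justifying the uniform bound on $S_{k}$: the argument requires that the exponent $p$ from Lemma~1 be chosen sufficiently small, which is legitimate because the conclusion of Lemma~1 remains valid after replacing $p$ with any smaller positive number (the bound $\gamma ((i+1)/(k+1))^{p}$ only weakens). In the borderline or large-$p$ regimes the integral comparison would instead produce logarithmic or polynomial-in-$k$ factors, so the clean form of~\eqref{h'} depends on this reduction to the convergent-series regime.
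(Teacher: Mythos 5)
Your proposal is correct, and its first two steps (homogeneity of the norm, Lemma~1, factoring out $(k+1)^{-2p}$) coincide exactly with the paper's, arriving at the same reduction to the scalar sum $S_{k}=\sum_{i=1}^{k}(i+1)^{2p}/i^{2}$. Where you diverge is in how $S_{k}$ is bounded, and your route is the sounder one. The paper invokes Kronecker's lemma to get $(k+1)^{-2p}S_{k}\rightarrow 0$ (which only yields that this quantity is bounded by a constant, i.e.\ $\sum_{i}\Vert\cdot\Vert^{2}=O(1)$, not the claimed $O((k+1)^{-2p})$), and then splits $S_{k}$ at an index $N^{\ast}$ and \emph{drops the nonnegative tail} $\sum_{i=N^{\ast}+1}^{k}(i+1)^{2p}/i^{2}$ while asserting an upper bound --- an inequality in the wrong direction. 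Your direct comparison $S_{k}\le 2^{2p}\sum_{i\ge 1}i^{2p-2}$, together with the observation that Lemma~1 remains valid after replacing $p$ by any smaller positive exponent (legitimate, since $(i+1)/(k+1)\le 1$ makes the bound monotone in $p$), is the honest way to obtain a $k$-independent constant $C$, and it proves the lemma as the existence statement it claims to be. You are also right to flag the borderline regimes: for $p=1/2$ one only gets $O(\log k/(k+1))$ and for $p>1/2$ only $O(1/(k+1))$, so the displayed bound with the \emph{same} $p$ is actually false there. The one cost of your fix, worth noting, is that it forces $p<1/2$, which is in tension with the later Corollary where the rate $O(k^{-2p})$ is asserted for $p>\frac{1}{2}$; that inconsistency lives in the paper, not in your argument.
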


\begin{proof}
By virtue of the relation $(\ref{ee})$ one has%
\begin{equation*}
\exists \gamma >0,\text{ }\exists p>0,\text{ }\forall \text{ }1\leq i\leq
k:\left\Vert \dprod\limits_{j=i+1}^{k}(I-\frac{1}{j}A_{m+1})\frac{1}{i}%
\right\Vert ^{2}\leq \gamma ^{2}\frac{(i+1)^{2p}}{(k+1)^{2p}i^{2}}.
\end{equation*}%
\newline
Then\newline
\begin{equation}
\exists \gamma >0,\text{ }\exists p>0,\text{ }\forall \text{ }1\leq i\leq
k:\sum_{i=1}^{k}\left\Vert \dprod\limits_{j=i+1}^{k}(I-\frac{1}{j}A_{m+1})%
\frac{1}{i}\right\Vert ^{2}\leq \frac{\gamma ^{2}}{(k+1)^{2p}}\sum_{i=1}^{k}%
\frac{(i+1)^{2p}}{i^{2}}.  \label{1}
\end{equation}%
\newline
By Kronecker's lemma, $\frac{\gamma ^{2}}{(k+1)^{2p}}\sum_{i=1}^{k}\frac{%
(i+1)^{2p}}{i^{2}}$ tends to $0$ when $k$ tends to infinity.\newline
In fact, $\left( \frac{1}{i^{2}}\right) _{i\in 
\mathbb{N}
^{\ast }}$ is a convergent sequence and $\lim\limits_{i\rightarrow +\infty
}(i+1)^{2p}=+\infty .$\newline
So, 
\begin{equation}
\lim\limits_{i\rightarrow +\infty }\frac{1}{(k+1)^{2p}}\sum_{i=1}^{k}\frac{%
(i+1)^{2p}}{i^{2}}=0.  \label{kro}
\end{equation}%
\newline
From the relation $(\ref{kro})$ one deduces that: $\exists (N^{\ast }$ $\in 
\mathbb{N}
^{\ast })$\ such that 
\begin{equation}
\lim\limits_{k\rightarrow \infty }\frac{1}{\left( k+1\right) ^{2p}}%
\sum_{i=N^{\ast }+1}^{k}\frac{(i+1)^{2p}}{i^{2}}=0.
\end{equation}%
\newline
We have the following relationship%
\begin{equation*}
\frac{1}{\left( k+1\right) ^{2p}}\sum_{i=1}^{k}\frac{(i+1)^{2p}}{i^{2}}=%
\frac{1}{\left( k+1\right) ^{2p}}\sum_{i=1}^{N^{\ast }}\frac{(i+1)^{2p}}{%
i^{2}}+\frac{1}{\left( k+1\right) ^{^{2c}}}\sum_{i=N^{\ast }+1}^{k}\frac{%
(i+1)^{2p}}{i^{2}}
\end{equation*}%
\newline
\begin{equation*}
\leq \frac{1}{\left( k+1\right) ^{2p}}\sum_{i=1}^{N^{\ast }}\frac{(i+1)^{2p}%
}{i^{2}}=\frac{C}{\left( k+1\right) ^{2p}},
\end{equation*}%
\newline
with, $\sum_{i=1}^{N}\frac{(i+1)^{2ap}}{i^{2}}=C.$\newline
Replacing in $(\ref{1})$ we find $(\ref{h'}).$\newline
\end{proof}

\section{Exponential inequalities and convergence results}

In this section, exponential inequalities of the Hoeffding type are
established. These allow us to establish the almost complete convergence and
the rate of convergence of the iterative procedure $\left( \ref{cA}\right) $
to solve the heat equation $\left( \ref{a}\right) .$

\begin{definition}
\bigskip The sequence of random variables $\left( X_{(k)\text{ }}\right)
_{k\in 
\mathbb{N}
^{\ast }}$ converges almost completely (a.co) to a random variable $X,$ when 
$k$ tends to infinity, if and only if: $\forall \varepsilon
>0,\sum_{k=1}^{+\infty }\mathbb{P}\left( \left\Vert
X_{(k+1)}-X_{(ex)}\right\Vert >\varepsilon \right) <+\infty .$
\end{definition}

\begin{theorem}
Let $A\in L(H).$Under the condition $\inf \left\{ re\text{ }\lambda ;\text{ }%
\lambda \in \sigma (A)\right\} >0$, the following exponential inequalities
hold.\newline
\begin{equation}
\mathbb{P}\left( \left\Vert X_{(k+1)}-X_{(ex)}\right\Vert >\varepsilon
\right) \leq 2\exp \left( -\frac{(k+1)^{2p}\varepsilon ^{2}}{\alpha }\right)
,\text{ }\alpha \in 
\mathbb{R}
.  \label{n}
\end{equation}
\end{theorem}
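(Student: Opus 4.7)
The plan is to start from the explicit decomposition (\ref{e}), which writes the error as a deterministic term coming from the initial gap $X_{1}-X_{ex}$ plus a stochastic term that is a weighted sum of the bounded i.i.d.\ noises $\xi_{i}$. First, I would apply Lemma 1 with $i=0$ to obtain
$$\left\Vert \prod_{i=1}^{k}\bigl(I-\tfrac{1}{i}A_{m+1}\bigr)(X_{1}-X_{ex})\right\Vert \leq \gamma \left(\tfrac{1}{k+1}\right)^{p}\left\Vert X_{1}-X_{ex}\right\Vert,$$
which decays as $(k+1)^{-p}$. Hence, for $k$ large enough, this deterministic part is bounded by $\varepsilon/2$, and the event $\{\|X_{(k+1)}-X_{(ex)}\|>\varepsilon\}$ is contained in the event that the stochastic sum exceeds $\varepsilon/2$.

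Second, I would apply a Hoeffding-type exponential inequality to the weighted sum $S_{k}=\sum_{i=1}^{k}M_{i,k}\,\xi_{i}$, where $M_{i,k}=\prod_{j=i+1}^{k}(I-\tfrac{1}{j}A_{m+1})\tfrac{1}{i}$ is deterministic and the $\xi_{i}$ are independent with $\|\xi_{i}\|<b$. The vector-valued bound (obtained either directly, or componentwise together with a union bound over the $N-1$ coordinates) gives
$$\mathbb{P}\bigl(\|S_{k}\|>\varepsilon/2\bigr)\leq 2\exp\!\left(-\frac{(\varepsilon/2)^{2}}{2b^{2}\sum_{i=1}^{k}\|M_{i,k}\|^{2}}\right).$$

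Third, I would invoke Lemma 2 to control the variance factor: $\sum_{i=1}^{k}\|M_{i,k}\|^{2}\leq C\gamma^{2}/(k+1)^{2p}$. Substituting this bound into the Hoeffding estimate transforms the denominator in the exponent into $8b^{2}C\gamma^{2}/(k+1)^{2p}$, which produces the desired form $2\exp(-(k+1)^{2p}\varepsilon^{2}/\alpha)$ with $\alpha=8b^{2}C\gamma^{2}$. For the small values of $k$ for which the deterministic part exceeds $\varepsilon/2$, the bound is trivially true since the right-hand side is close to $2$ and probabilities are at most $1$; it suffices to enlarge $\alpha$ slightly so that the inequality holds uniformly in $k$.

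The main obstacle is the careful use of a Hoeffding inequality for a sum of bounded independent random \emph{vectors}, since the standard statement is scalar. The cleanest route is to bound each coordinate of $S_{k}$ with the classical scalar Hoeffding inequality, then take a union bound over the $N-1$ coordinates of $\mathbb{R}^{N-1}$; this only modifies the constant $\alpha$ and preserves the exponential rate $(k+1)^{2p}$ dictated by Lemma 2. A secondary technical point is absorbing the fixed offset $\|X_{1}-X_{ex}\|$ uniformly in $k$ into $\alpha$, which as noted above is handled by the trivial regime for small $k$.
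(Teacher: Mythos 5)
Your proposal is correct and follows essentially the same route as the paper's proof: the decomposition (\ref{e}), Lemma 1 to kill the deterministic initial-error term, a Hoeffding-type inequality for the bounded independent vector noises (the paper uses Pinelis's Hilbert-space version, which is equivalent to your suggested componentwise route up to constants), and Lemma 2 to bound the variance factor by $C\gamma^{2}/(k+1)^{2p}$, yielding $\alpha=8Cb^{2}\gamma^{2}$. Your explicit handling of the small-$k$ regime, where the deterministic term may exceed $\varepsilon/2$, is in fact more careful than the paper's, which asserts that bound without restricting $k$.
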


\begin{proof}
By virtue of the relation $(\ref{e}),$ one has the following expression%
\begin{equation*}
\mathbb{P}\left( \left\Vert X_{(k+1)}-X_{(ex)}\right\Vert >\varepsilon
\right) =P\left( \left\Vert \dprod\limits_{i=1}^{k}(I-\frac{1}{i}%
A_{m+1})(X_{(1)}-X_{(ex)})+\sum_{i=1}^{k}\dprod\limits_{j=i+1}^{k}(I-\frac{1%
}{j}A_{m+1})\frac{1}{i}\xi _{i}\right\Vert >\varepsilon \right)
\end{equation*}%
\newline
\begin{equation*}
\leq \mathbb{P}\left( \left\Vert \sum_{i=1}^{k}\dprod\limits_{j=i+1}^{k}(I-%
\frac{1}{j}A_{m+1})\frac{1}{i}\xi _{i}\right\Vert >\varepsilon -\left\Vert
\dprod\limits_{i=1}^{k}(I-\frac{1}{i}A_{m+1})(X_{(1)}-X_{(ex)})\right\Vert
\right) .
\end{equation*}%
\newline
The relation $(\ref{ine})$ proves that: 
\begin{equation}
\exists \varepsilon >0,\left\Vert \dprod\limits_{i=1}^{k}(I-\frac{1}{i}%
A_{m+1})(X_{(1)}-X_{(ex)})\right\Vert \leq \frac{\varepsilon }{2}.
\end{equation}%
\newline
Then,\newline
\begin{equation*}
\mathbb{P}\left( \left\Vert X_{(k+1)}-X_{(ex)}\right\Vert >\varepsilon
\right) \leq \mathbb{P}\left( \left\Vert
\sum_{i=1}^{k}\dprod\limits_{j=i+1}^{k}(I-\frac{1}{j}A_{m+1})\frac{1}{i}\xi
_{i}\right\Vert >\varepsilon -\frac{\varepsilon }{2}\right)
\end{equation*}%
\newline
\begin{equation*}
\leq \mathbb{P}\left( \left\Vert \sum_{i=1}^{k}\dprod\limits_{j=i+1}^{k}(I-%
\frac{1}{j}A_{m+1})\frac{1}{i}\xi _{i}\right\Vert >\frac{\varepsilon }{2}%
\right) .
\end{equation*}%
\newline
We pose 
\begin{equation}
\eta _{i}=\dprod\limits_{j=i+1}^{k}(I-\frac{1}{j}A_{m+1})\frac{1}{i}\xi _{i}.
\label{theta}
\end{equation}%
\newline
$\left( \eta _{i}\right) _{i\in 
\mathbb{N}
^{\ast }}$ is a sequence of bounded and i.i.d random variables in a Hilbert
space such that%
\begin{equation}
\left\Vert \eta _{i}\right\Vert <\left\Vert \dprod\limits_{j=i+1}^{k}(I-%
\frac{1}{j}A_{m+1})\right\Vert \frac{1}{i}b=d_{i}.  \label{ntheta}
\end{equation}%
\newline
Thus%
\begin{equation}
\mathbb{P}\left( \left\Vert X_{(k+1)}-X_{(ex)}\right\Vert >\varepsilon
\right) \leq \mathbb{P}\left( \left\Vert \sum_{i=1}^{k}\eta _{i}\right\Vert >%
\frac{\varepsilon }{2}\right) .  \label{1111}
\end{equation}%
We give the Pinelis-Hoeffding inequality for the sequence $\left( \eta
_{i}\right) _{i\in 
\mathbb{N}
^{\ast }}$ such that $\left\Vert \eta _{i}\right\Vert <d_{i}$ in a Hilbert
space $H$ ( \cite{pinelis1}).%
\begin{equation}
\mathbb{P}\left( \left\Vert \sum_{i=1}^{k}\eta _{i}\right\Vert >\varepsilon
\right) \leq 2\exp \left( -\frac{\varepsilon ^{2}}{2\sum_{i=1}^{k}d_{i}^{2}}%
\right) .  \label{xx}
\end{equation}%
Then, we deduce from $(\ref{ntheta})$ and $(\ref{xx})$ the following
relation.%
\begin{equation}
\mathbb{P}\left( \left\Vert \sum_{i=1}^{k}\eta _{i}\right\Vert >\frac{%
\varepsilon }{2}\right) \leq 2\exp \left( -\frac{\varepsilon ^{2}}{%
8b^{2}\sum_{i=1}^{k}\left\Vert \dprod\limits_{j=i+1}^{k}(I-\frac{1}{j}%
A_{m+1})\right\Vert ^{2}(\frac{1}{i})^{2}}\right) .  \label{PIN}
\end{equation}%
Finally, by virtue of the relation $(\ref{1111})$ and $(\ref{h'})$ and from
the relation $(\ref{PIN})$, we obtain%
\begin{equation*}
\mathbb{P}\left( \left\Vert X_{(k+1)}-X_{(ex)}\right\Vert >\varepsilon
\right) \leq 2\exp \left( -\frac{\varepsilon ^{2}}{\frac{8C\left( \gamma
b\right) ^{2}}{(k+1)^{2p}}}\right) .
\end{equation*}%
By putting $\alpha =8C\left( \gamma b\right) ^{2}$ we will find $(\ref{n}).$%
\newline
\end{proof}

In the next corollary, we give the proof that the iterative procedure $(\ref%
{cA})$ converges almost completely to the solution of the equation $(\ref{a}%
).$

\begin{corollary}
Under the conditions of Theorem 1:

The recursive procedure $(\ref{cA})$ converges almost completely (a.co) to
the solution of the equation$\ (\ref{a}):$%
\begin{equation}
\forall \varepsilon >0,\text{ }\sum_{k=1}^{+\infty }\mathbb{P}\left(
\left\Vert X_{(k+1)}-X_{(ex)}\right\Vert >\varepsilon \right) <+\infty .
\label{con}
\end{equation}

Additionally,%
\begin{equation}
\left\Vert X_{(k+1)}-X_{(ex)}\right\Vert =O(k^{-2p}),\text{ }p>\frac{1}{2}.
\label{vcon}
\end{equation}
\end{corollary}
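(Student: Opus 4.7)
The plan is to derive both statements of the corollary as direct consequences of the exponential inequality of Theorem~1, namely $\mathbb{P}(\|X_{(k+1)}-X_{(ex)}\|>\varepsilon)\leq 2\exp\bigl(-(k+1)^{2p}\varepsilon^{2}/\alpha\bigr)$, combined with the variance-type estimate of Lemma~2.

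For the almost complete convergence~(\ref{con}), I would fix $\varepsilon>0$ and sum the bound of Theorem~1 over $k\in\mathbb{N}^{\ast}$. Since $(k+1)^{2p}\to\infty$ with $p>0$, the right-hand side decays like a stretched exponential in $k$, which is faster than any power of $1/k$; a direct integral comparison (or Cauchy condensation) gives $\sum_{k\geq 1}\exp(-ck^{2p})<\infty$ for every $c>0$. Consequently $\sum_{k=1}^{+\infty}\mathbb{P}(\|X_{(k+1)}-X_{(ex)}\|>\varepsilon)<+\infty$, which is exactly the definition of almost complete convergence recalled above.

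For the rate~(\ref{vcon}), the strategy is to apply Theorem~1 at a carefully shrinking threshold and close the argument by Borel--Cantelli. I would take $\varepsilon_{k}=c\,(k+1)^{-p}\sqrt{\log k}$ with $c^{2}/\alpha>1$, so that Theorem~1 yields $\mathbb{P}(\|X_{(k+1)}-X_{(ex)}\|>\varepsilon_{k})\leq 2k^{-c^{2}/\alpha}$, a convergent series. Borel--Cantelli then gives $\|X_{(k+1)}-X_{(ex)}\|\leq\varepsilon_{k}$ eventually almost surely; together with the deterministic decay of $\|\prod_{i=1}^{k}(I-A_{m+1}/i)(X_{(1)}-X_{(ex)})\|$ controlled in Lemma~1, this produces an almost sure rate of order $(k+1)^{-p}\sqrt{\log k}$.

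The main obstacle I foresee is matching this bound to the precise order $O(k^{-2p})$ stated in~(\ref{vcon}), because a Borel--Cantelli argument applied to a Gaussian-like tail carries an unavoidable $\sqrt{\log k}$ factor. To handle this I would fall back directly on Lemma~2: the Pinelis--Hoeffding computation inside the proof of Theorem~1, combined with $\sum_{i=1}^{k}\|\prod_{j=i+1}^{k}(I-A_{m+1}/j)(1/i)\|^{2}\leq C\gamma^{2}/(k+1)^{2p}$, furnishes the $L^{2}$ estimate $\mathbb{E}\|X_{(k+1)}-X_{(ex)}\|^{2}=O((k+1)^{-2p})$, which is of the announced order. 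The hypothesis $p>1/2$ is then exactly what ensures that $\sum_{k}k^{-2p}<\infty$, so that a final Borel--Cantelli step can lift this $L^{2}$ rate to an almost sure one and absorb the residual logarithmic slack into the $O$-notation.
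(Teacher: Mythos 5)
Your treatment of the summability claim (\ref{con}) is correct and is essentially the paper's own argument: fix $\varepsilon>0$, insert the bound of Theorem~1, and note that $\sum_{k}\exp\left(-c(k+1)^{2p}\right)$ converges for every $c>0$ and every $p>0$ (your comparison argument is cleaner than the paper's root test, which is why the paper drags in the unnecessary restriction $p>\tfrac{1}{2}$ at this step).

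For the rate (\ref{vcon}) there is a genuine gap, and you have in fact diagnosed it yourself: a tail bound of the form $2\exp\left(-(k+1)^{2p}\varepsilon^{2}/\alpha\right)$ evaluated at the $k$-dependent threshold $\varepsilon_{k}=Ak^{-2p}$ gives $2\exp\left(-A^{2}(k+1)^{2p}k^{-4p}/\alpha\right)\rightarrow 2$, so the series $\sum_{k}\mathbb{P}\left(\Vert X_{(k+1)}-X_{(ex)}\Vert>Ak^{-2p}\right)$ cannot be shown finite this way; the rate genuinely extractable from Theorem~1 is $O\left((k+1)^{-p}\sqrt{\log k}\right)$, exactly as your Borel--Cantelli computation produces. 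Your proposed repair through the $L^{2}$ estimate does not close the gap either: $\mathbb{E}\Vert X_{(k+1)}-X_{(ex)}\Vert^{2}=O\left((k+1)^{-2p}\right)$ controls the $L^{2}$ \emph{norm} at order $k^{-p}$, not $k^{-2p}$, and Chebyshev at threshold $Ak^{-2p}$ yields a bound of order $k^{2p}$, which diverges; no Borel--Cantelli step can recover $O(k^{-2p})$ from it. You should know that the paper's own proof of (\ref{vcon}) --- ``choose $A=\varepsilon k^{2p}$ in (\ref{ici})'' --- commits precisely the substitution error you were worried about, since (\ref{ici}) is established for a fixed $\varepsilon$ and does not survive replacing $\varepsilon$ by a quantity tending to zero. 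So the obstacle you identified is real and not an artifact of your method: the order $O(k^{-2p})$ does not follow from the stated hypotheses, and the honest output of your argument, namely $\Vert X_{(k+1)}-X_{(ex)}\Vert=O\left(k^{-p}\sqrt{\log k}\right)$ almost completely for a sufficiently large constant, is the correct conclusion.
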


\begin{proof}
1) Let us pose 
\begin{equation*}
v_{k}=2\exp \left( -\frac{(k+1)^{2p}\varepsilon ^{2}}{\alpha }\right) \leq
2\exp \left( -\left( k+1\right) ^{2p}\varepsilon ^{2}\right) ,
\end{equation*}%
\newline
\newline
Applying Cauchy's rule to the positive term series $v_{k}$ it follows that:%
\newline
When $p>\frac{1}{2}$, $\sum_{k=1}^{+\infty }v_{k}$ is a convergent series.%
\newline
This implies that%
\begin{equation}
\forall \varepsilon >0,\sum_{k=1}^{+\infty }\mathbb{P}\left( \left\Vert
X_{(k+1)}-X_{(ex)}\right\Vert >\varepsilon \right) \leq \sum_{k=1}^{+\infty
}2\exp \left( -\left( k+1\right) ^{2p}\varepsilon ^{2}\right) <+\infty .
\label{ici}
\end{equation}%
\newline
2) To obtain $(\ref{vcon}),$ it is sufficient to choose $A=\varepsilon
k^{2p} $ in $(\ref{ici})$ to have%
\begin{equation*}
\sum_{k=1}^{+\infty }P\left( \left\Vert X_{(k+1)}-X_{(ex)}\right\Vert
>Ak^{-2p}\right) <+\infty .
\end{equation*}
\end{proof}

\bigskip

${\LARGE Acknowledgement:}$ We acknowledge support of "Direction de la
Recherche Scientifique et du D\'{e}veloppement Technologique
DGRSDT".MESRS,Algeria.

\end{document}